\DeclarePairedDelimiter\abs{\lvert}{\rvert}\DeclarePairedDelimiter\floor{\lfloor}{\rfloor}\newcommand{\pgfsucc}{\succeq_{\text{pgf}}}
\DeclareMathOperator{\Poi}{Poisson}
\newcommand{\Thom}{\mathbf{T}}
\newcommand{\defeq}{:=}
\begin{document}


\section{Introduction}

The frog model is an interacting particle system on a graph. Initially, one designated
site contains an active particle, and all other sites contain some number of sleeping particles,
typically sampled independently from a given distribution.
Active particles perform simple random walks; these are generally taken to be in discrete time,
though it is irrelevant to this paper.
When an active particle visits a site, all sleeping particles there are activated.
For no deep reason, the particles have come to be called frogs. We represent
a frog model as a pair $(S,\eta )$ where $\eta (v)$ gives the count of sleeping particles on
a vertex $v$, and $S_{t}(v,i)$ gives the path of the $i$th particle on vertex~$v$ for each $1\leq i\leq \eta (v)$.

We call a realization of the frog model \emph{recurrent} if the starting site is visited
infinitely often by particles and \emph{transient} if not. The first question for the frog
model on a given infinite graph is whether it is recurrent or transient.
On $\mathbb {Z}^{d}$, the frog
model is recurrent a.s.\ if the initial configuration $(\eta (v))_{v\in \mathbb {Z}^{d}}$ is i.i.d.,
so long as $\eta (v)$ is not deterministically
equal to zero \cite{AMPR}.
On the other hand, the frog model on the infinite $d$-ary tree can be either transient
a.s.\ or recurrent a.s., depending on the initial configuration.
For example, on the $d$-ary tree when $(\eta (v))_{v}$ is
i.i.d.-$\Poi (\mu )$, the frog model
is recurrent or transient depending on whether $\mu $ is greater or less than a critical
value $\mu _{c}(d)$ \cite{HJJ2,JJ_log}. In \cite{JJ_order}, the authors give a theorem
comparing frog models with different initial conditions on the same graph, which shows that
the frog model on the $d$-ary tree is recurrent if $\eta (v)\pgfsucc \Poi (\mu )$
for all $v$ for some $\mu >\mu _{c}(d)$.
The condition $X\pgfsucc Y$ means that $\mathbf {E}t^{X}\leq \mathbf {E}t^{Y}$ for all
$t\in (0,1)$, which roughly speaking requires $\eta (v)$ to have expectation at least $\mu $
and be less dispersed than the Poisson distribution.

This raises the question of whether the transience and recurrence of the frog model depends
on the entire distribution of each $\eta (v)$ or just on the expectation. This question was posed
in \cite[Open~Question~11]{JJ_order} with the conjecture that the entire distribution
matters. We confirm this:
\begin{theorem}\label{thm:transience}
  Consider the frog model on the the $d$-ary tree with i.i.d.-$\pi $ initial conditions.
  For arbitrarily large $\mu $, there exists a distribution $\pi $ with mean~$\mu $ so that the model
  is a.s.\ transient.
\end{theorem}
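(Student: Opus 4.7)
My plan is to take $\pi = (1-p)\delta_0 + p\delta_N$ with $p = \mu/N$ for $N$ large. This highly clumped distribution has mean $\mu$: almost every vertex holds no sleeping frogs, while rare ``cluster'' vertices each hold $N$. The aim is to show that for $N$ sufficiently large (depending on $\mu$ and $d$), the expected number of visits $\mathbf{E}[V_0]$ by active particles to the root is finite. Since simple random walk on the $d$-ary tree is transient, this will imply almost-sure transience of the frog model.

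Write $A(v)$ for the event that $v$ is reached by some active frog, $q(u,v)$ for the probability that simple random walk from $u$ ever visits $v$, and $G$ for the Green function. Since $A(v)$ is independent of $\eta(v)$ for $v \neq 0$,
\[
\mathbf{E}[V_0] = G(0,0) + \mu \sum_{v \neq 0} G(v,0)\,\mathbf{P}[A(v)].
\]
A union bound over the source $u$ of the walk first reaching $v$, using that a cluster at $u$ (density $p$) emits $N$ independent walks whose chance of hitting $v$ is at most $\min(N\,q(u,v),\,1)$, yields the recursion
\[
\mathbf{P}[A(v)] \leq q(0,v) + \sum_{u \neq 0} \mathbf{P}[A(u)]\,\min(\mu\,q(u,v),\, p).
\]
Crucially, the factor $\min(\mu\,q(u,v),\, p)$ is where the distribution of $\eta$ appears beyond its mean: whenever $\mu\,q(u,v) > p$, i.e., when $u$ is close to $v$, the per-link contribution collapses from the mean-only value $\mu\,q(u,v)$ down to the small cluster density $p$. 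This is the mechanism that distinguishes $\pi$ from a Poisson distribution of the same mean.

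The main obstacle is to close this recursion and obtain a bound of the form $\mathbf{P}[A(v)] \leq C\beta^{|v|}$ strong enough that $\sum_v G(v,0)\,\mathbf{P}[A(v)]$ converges. Iterating produces a sum over activation chains $0 \to u_1 \to \cdots \to u_k \to v$, within each of which the two regimes of the $\min$ must be balanced: short hops contribute $p$ per link (making chains with many short hops exponentially suppressed in $k$), while long hops contribute $\mu\,q$ (geometrically decaying in hop length). For $p$ small enough (equivalently, $N$ large enough), the cost $p^k$ of many short hops dominates, leaving only chains that take a few long hops whose product is exponentially small in $|v|$. This yields the required geometric bound on $\mathbf{P}[A(v)]$, and the theorem follows.
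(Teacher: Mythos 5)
Your setup is essentially the paper's: the same two-point distribution $\pi_N=(1-\mu/N)\,\delta_0+(\mu/N)\,\delta_N$, and the same key mechanism, namely that the probability that the island at $u$ sends a frog to $v$ is at most $\min(\mu\,q(u,v),\,p)$, so that many frogs from one island hitting the same site cost only one ``branching'' event. Your recursion is the one-step version of the paper's generation decomposition $\mathcal{B}_0,\mathcal{B}_1,\dots$, and your $\min(\mu q,p)$ is exactly the paper's $\mu\min\{d^{-k},d^{-m}\}$ with $N=d^m$. However, the step you label ``the main obstacle'' is the entire content of the paper's key estimate (Lemma~\ref{lem:mtg}), and the heuristic you offer for it does not engage with the actual difficulty. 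The problem is not chains with many short hops being suppressed by $p^k$: already a \emph{single} application of your kernel diverges. There are $\asymp d^{k}$ vertices $u$ at distance $k$ from $v$, each contributing $\min(\mu d^{-k},p)$; for every $k>\log_d N$ this is $\mu d^{-k}$ per vertex, i.e.\ about $\mu$ per distance shell, so $\sum_u\min(\mu\,q(u,v),p)=\infty$ no matter how small $p$ is. Smallness of $p$ alone cannot close the recursion.

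What actually saves the argument is that almost all of those $d^{k}$ vertices lie deeper in the tree than $v$, so the divergence disappears once $\mathbf{P}[A(u)]$ is weighted by an exponential in depth with rate strictly between $d^{-1}$ and $1$ --- the paper takes $\lambda^{\ell(u)}$ with $\lambda=d^{-1/2}$ --- and one verifies the weighted contraction $\sum_u\lambda^{\ell(u)}\min(\mu\,q(u,v),p)\le\alpha\,\lambda^{\ell(v)}$ with $\alpha<1$. Checking this requires splitting the sum over $u$ according to whether $u$ lies above or below $v$ and whether its distance exceeds the cutoff $m=\log_d N$ (six regions in the paper's proof), and the computation shows $\alpha=O(\mu\,m\,d^{-m/2})$, which is why $N$ must be taken large depending on $\mu$. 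Note also that a bound of the form $\mathbf{P}[A(v)]\le C\beta^{|v|}$ cannot be obtained by induction on $|v|$, since your recursion couples $v$ to vertices $u$ with $|u|>|v|$; one needs either the chain expansion you allude to (which again reduces to the weighted operator-norm bound above) or the paper's device of iterating over generations of islands and bounding $\mathbf{E}\,w_\lambda(\mathcal{B}_n)\le\alpha^n$. A further minor point: on the $d$-ary tree $q(u,v)$ is not exactly $d^{-\mathrm{dist}(u,v)}$ because of the root's smaller degree, which the paper sidesteps by working on the $(d+1)$-regular tree and transferring the conclusion at the end. Until the weighted one-step estimate is actually carried out, the proof has a genuine gap.
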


Thus, for large enough $\mu $, there exist distributions $\pi $ with mean $\mu $ so that the frog model on the $d$-ary tree is recurrent or so that it is transient.
In \S \ref{sec:proof} we prove Theorem~\ref{thm:transience}.
In \S \ref{sec:more} we describe how the argument can be extended to show transience for some initial distributions with infinite mean and (for $d \geq 14$) even when every site contains at least one particle.

For two processes resembling the frog model, the long-time behavior of the model depends only
on the mean particle distribution. The first of these is \emph{branching random walk}, essentially the frog
model except that particles spawn new particles even when moving to a previously visited site.
It is a classical result of Biggins's that for BRW, recurrence vs.\ transience depends only
on the mean number of particles spawned.
We explain this in more detail in Remark~\ref{prop:mean}.

The other process with contrasting behavior is
\emph{activated random walk}. Particles in this model move with the same dynamics as
the frog model except that active particles have some probability
of falling back asleep at each step. This process is usually considered
with all particles starting awake and moving in continuous time.
Both ARW and the frog model have been of significant interest in physics.
The frog model and the broader family
of $A+B\to 2B$ models that it falls into have been viewed as stochastic combustion models.
They have also been investigated as part of the general study of propagating fronts;
see Section~2.6.1 (i)--(v) in \cite{Panja} for a survey of the physics literature.
The interest in ARW comes from a connection with the phenomenon of self-organized criticality.
For a given sleep rate, ARW has been shown on many graphs to undergo
an absorbing-state phase transition; see the introductions of~\cite{BasuGangulyHoffman18,RS,ST,StaTag}.
In \cite{RSZ}, it is shown that the phase transition for this model on a regular tree with all particles initially active depends only on the initial density of particles, not on the distribution of particles at each site (it is stated for $\mathbb{Z}^{d}$ but the argument also works for unimodular graphs).
On the other hand, by Theorem~\ref{thm:transience} the same model starting with all but one particle sleeping can fixate with arbitrarily high densities.
This suggests that the question of fixation vs.\ activity for the ARW is more delicate than it may seem, at least outside the setup of graphs with polynomial growth.

\section{Proofs}
\label{sec:proof}

We consider the $(d+1)$-regular tree, denoted $\Thom _{d}$, and in the end we translate the result from $\Thom _{d}$ to the $d$-ary tree.
Let $\rho \in \Thom _{d}$ be the designated vertex starting with one active particle, which we will call the root.
Fix a choice of $\mu $, and let
\[ \pi _{N}\defeq \frac{\mu }{N}\, \delta _{N} + \bigl(1-\frac{\mu }{N}\bigr)\, \delta _{0}.\]
From now on, we consider the frog model on $\Thom _{d}$ with i.i.d.-$\pi _{N}$ initial conditions.
The initial configuration can be thought of as a sea of empty sites, with islands on which $N$ particles sleep.
Our goal is to show that for large enough $N$, this frog model is transient.

The typical way of proving transience for the frog model on a tree is to
add extra particles to the model
so that a particle \emph{always} wakes up particles when it moves.
The resulting process is a branching random walk, which can then shown
to be transient when the distribution of sleeping particles is sufficiently small.
See \cite[Proposition~15]{HJJ2} for the basic example of this argument, or see
\cite[Section~3.2]{HJJ1} for a more elaborate one.
The recurrence or transience of the branching random
walk produced by this argument
depends only on the expected number of particles per site (see Remark~\ref{prop:mean}).
Thus, we will need a different
approach, since for large values of $\mu $ this branching random walk will always be recurrent.

Our argument instead uses a different branching process not indexed by time
in the usual way.
Rather than track individual particles and branch at each site they visit,
we branch at each site visited over all time by the batch of particles starting
from the same island.
The advantage is that if many particles starting from the same island visit the same site,
the process branches only once at that site.

\begin{figure}
  \centering
\includegraphics[width=.7\textwidth ]{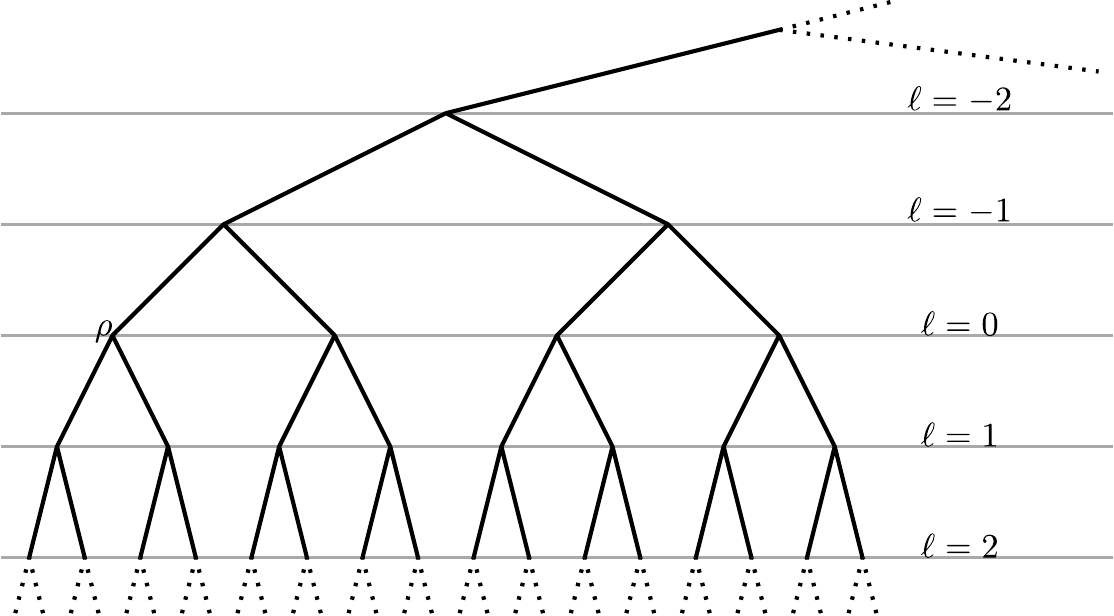}
  \caption{A neighborhood of the root $\rho $ in $\Thom _{2}$, organized by levels.}
  \label{fig:levels}
\end{figure}

We define a function $\ell \colon \Thom _{d}\to \mathbb {Z}$ where $\ell (v)$ represents the ``level'' of
vertex~$v$ as follows.
We set $\ell (\rho )=0$, and then specify that for each $v\in \Thom _{d}$, we have
$\ell (u)=\ell (v)-1$ for one neighbor $u$ of $v$ and $\ell (u)=\ell (v)+1$ for all other neighbors
$u$ (see Figure~\ref{fig:levels}). Note that $\ell (v)$ is not the same as the distance between $v$ and $\rho $.\footnote{This artificial introduction of levels is the same as giving an arbitrary genealogy to the tree, so that the level decreases towards ancestors and increases towards descendants.
In particular, the distance from $\rho $ to $v$ equals $\pm \ell (v)$ if and only if $v$ is a direct descendant or ancestor of $\rho $, respectively.}
Fix $\lambda >0$.
We define the weight function
\begin{align*}
  w_\lambda (A) = \sum _{v\in A} \lambda ^{\ell (v)}
, \quad
A\subseteq \Thom _{d}
.
\end{align*}

Most of our work will be in the following lemma.

\begin{lemma}\label{lem:mtg}
Suppose a random number of particles distributed as $\pi _{N}$ start at $\rho $ and perform independent random walks on $\Thom _{d}$.
Let $A$ be the random (and possibly empty) set of sites visited by these particles.
Then, for $\lambda =\frac{1}{\sqrt{d}}$ and $N$ sufficiently large, $\mathbf {E}w_\lambda (A) < 1$.
\end{lemma}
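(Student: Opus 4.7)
Since $\pi_N$ is supported on $\{0,N\}$, conditioning on the number of particles gives $\mathbf{E}\, w_\lambda(A) = (\mu/N)\,\mathbf{E}\, w_\lambda(A_N)$, where $A_N$ denotes the union of the ranges of $N$ independent simple random walks started at $\rho$. Writing $p(v)$ for the probability that a single walk ever visits $v$, linearity of expectation together with the independence of the $N$ walks yields
\begin{equation*}
  \mathbf{E}\, w_\lambda(A_N) = \sum_{v \in \Thom_d} \lambda^{\ell(v)}\bigl(1-(1-p(v))^N\bigr).
\end{equation*}
The plan is to show this quantity is $O(\sqrt{N}\log N)$, whence $\mathbf{E}\, w_\lambda(A) = O(\mu \log N / \sqrt{N})$, which is $<1$ once $N$ is sufficiently large.

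The first preliminary is to identify $p(v) = d^{-|v|}$, where $|v| := d(\rho,v)$. The probability $\beta$ that a walk from a vertex ever reaches a specified neighbor satisfies the first-passage equation $\beta = 1/(d+1) + d\beta^2/(d+1)$, whose unique solution strictly below $1$ is $\beta = 1/d$; iterating along the unique geodesic from $\rho$ to $v$ via the strong Markov property then gives $p(v) = \beta^{|v|}$. The combinatorial heart of the proof is to control the weighted shell sums $f_m(\lambda) := \sum_{|v|=m} \lambda^{\ell(v)}$. Splitting on whether the final step of the geodesic from $\rho$ to $v$ is ``to a child'' or ``to the parent'' produces a two-term linear recursion in $m$ whose characteristic roots are $d\lambda$ and $\lambda^{-1}$. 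These coincide precisely at $\lambda = 1/\sqrt{d}$, and resolving the resulting resonance yields $f_m(1/\sqrt{d}) = \Theta(m\, d^{m/2})$.

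With this estimate in hand, I would bound $1-(1-d^{-m})^N \le \min(1,\, Nd^{-m})$ via Bernoulli's inequality and split the shell sum at $m_0 := \lfloor \log_d N \rfloor$:
\begin{equation*}
  \sum_v \lambda^{\ell(v)}\bigl(1-(1-d^{-|v|})^N\bigr) \;\le\; \sum_{m \le m_0} f_m(\lambda) \;+\; N \sum_{m > m_0} d^{-m} f_m(\lambda).
\end{equation*}
Using $f_m \le C\, m\, d^{m/2}$, the first sum is dominated by its final term, of order $m_0 \sqrt{N} \sim \sqrt{N}\log N$; the second is a polynomial-times-geometric tail of order $N\cdot(\log N)/\sqrt{N} = \sqrt{N}\log N$. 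The main technical obstacle is the shell-sum estimate at the critical value $\lambda = 1/\sqrt{d}$: the resonance in the recursion produces the extra factor $m$ in $f_m$, which is responsible for the $\log N$ inflation in the final bound. That this inflation is still harmless relies crucially on the bimodal choice of $\pi_N$, whose $\mu/N$ dilution of the initial configuration more than compensates for the $\sqrt{N}\log N$ growth of the shell sum.
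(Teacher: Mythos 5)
Your proposal is correct and follows essentially the same route as the paper: bound $\mathbf{P}[v\in A]$ by $\tfrac{\mu}{N}\min\{1,Nd^{-k}\}$ via the hitting probability $d^{-k}$ and a union bound, weight by $\lambda^{\ell(v)}$ with $\lambda=1/\sqrt{d}$, and split the sum at distance $\log_d N$, arriving at the same $O(\mu\log N/\sqrt{N})$ bound. The only cosmetic difference is that you obtain the shell sum $\sum_{|v|=m}\lambda^{\ell(v)}=\Theta(m\,d^{m/2})$ from a two-type transfer-matrix recursion with a repeated eigenvalue, whereas the paper counts vertices explicitly via $\varphi(j,k)$ and sums over the index set $\mathcal{S}$; both yield the same resonance factor $m$.
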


Choosing $\frac{1}{d}<\lambda <1$ would also work but the above choice simplifies the computations, so we fix $\lambda =\frac{1}{\sqrt{d}}$. This choice also asymptotically minimizes the bound we prove
as $N\to \infty $.
Before we prove this estimate, we show how it implies Theorem~\ref{thm:transience}.

\begin{proof}[Proof of Theorem~\ref{thm:transience} assuming Lemma~\ref{lem:mtg}]
  To prove a.s.\ transience, it is enough to do so assuming $\eta (\rho )$ is also random and distributed as $\pi _{N}$ like the other sites.
Define $\mathcal{B}_{0}=\{\rho \}$.
Define $\mathcal{B}_{n+1}$ inductively as the set of sites outside $\mathcal{B}_{0} \cup \dots \cup \mathcal{B}_{n}$ that are visited at any time by a particle originating at a site in $\mathcal{B}_{n}$.
The set $\cup _{n=0}^{\infty }\mathcal{B}_{n}$ then consists of all sites ever visited by the process.
We will show that this set does not encompass all of $\Thom _{d}$, from which we can conclude that the model is transient.
Although the sets $\mathcal{B}_{n}$ will be infinite when $\eta (\rho ) \ge 1$, their weight is finite, and using Lemma~\ref{lem:mtg} recursively we show that they are in fact small.

  Take $N$ large enough that $\alpha :=\mathbf {E}w_\lambda (A)<1$
  for the random set $A$ defined in Lemma~\ref{lem:mtg}.
  Suppose $v\in \mathcal{B}_{n}$ for some $n$.
The set of sites visited by the particles originating at $v$ is distributed identically to $A$, except that it is shifted from starting at $v$ instead of the root.
Thus, the expected weight of the sites visited by the particles originating at $v$ is $\lambda ^{\ell (v)}\mathbf {E}w_\lambda (A)$.
Hence,
  \begin{align*}
    \mathbf {E}\bigl[ w_\lambda (\mathcal{B}_{n+1})\mid \mathcal{B}_{n}\bigr] &\leq \sum _{v\in \mathcal{B}_{n}} \lambda ^{\ell (v)}\mathbf {E}w_\lambda (A) = \alpha w_\lambda (\mathcal{B}_{n}).
  \end{align*}
Since $w_\lambda (\mathcal{B}_{0})=1$, we get $\mathbf {E}[w_\lambda (\mathcal{B}_{n})]\leq \alpha ^{n}$ and finally
\begin{align*}
  \mathbf {E}\biggl[ w_\lambda \Bigl(\cup _{n=0}^{\infty }\mathcal{B}_{n}\Bigr) \biggr] =
    \mathbf {E}\Biggl[\sum _{n=0}^{\infty } w_\lambda (\mathcal{B}_{n}) \Biggr] \leq \sum _{n=0}^{\infty }\alpha ^{n}<\infty .
\end{align*}
Therefore $w_\lambda \bigl(\cup _{n=0}^{\infty }\mathcal{B}_{n}\bigr)<\infty $ a.s. Since $\Thom _{d}$ has infinite weight, we can conclude that with probability~$1$, not every site is visited.
Moreover, only finitely many sites at each level are ever visited.
We can also conclude that no site is visited infinitely often, since if one were, then
a.s.\ all vertices would be visited. Hence level~0 is visited finitely many times,
which by \cite[Corollary~16]{HJJ1} implies transience for the equivalent frog model on the $d$-ary tree.
\end{proof}

In what follows, each appearance of $C$ denotes a different positive finite constant which depends on $d$ and whose actual value (sometimes easy to find) is irrelevant.

\begin{proof}[Proof of Lemma~\ref{lem:mtg}]
  Our strategy is to sum $\mathbf {P}[v\in A] \lambda ^{\ell (v)}$ over all $v\in \Thom _{d}$.
  Let $\varphi (j,k)$ be the number of vertices in $\Thom _{d}$ at level~$j$ that have distance~$k$
  from the root. For $j\geq 1$,
  \begin{align*}
    \varphi (j, j) &= d^{j},\\
    \varphi (j,j+2i) &= (d-1) d^{j+i-1}\leq d^{j+i}\text{ for $i\geq 1$,}
  \intertext{and for $j\leq 0$,}
    \varphi (j,\abs{j}) &= 1,\\
    \varphi (j,\abs{j} + 2i) &= (d-1)d^{i-1}\leq d^{i}\text{ for $i\geq 1$.}
  \end{align*}
  If $k\neq \abs{j}+2k$ for some $i\geq 0$, then $\varphi (j,k)=0$.

The above combinatorial terms will be controlled using hitting probabilities.
For any $v\in \Thom _{d}$ at distance~$k$ from the root,
\begin{equation}
\label{eq:p.bound}
\mathbf {P}[\text{a random walk starting at } \rho \text{ ever visits } v] = d^{-k}.
\end{equation}
Let $p_{N}(k)$ be the probability that at least one of $N$ independent random walks starting at the root eventually hits a given vertex at distance~$k$ from the root in $\Thom _{d}$.
By a union bound,
\begin{equation}
\label{eq:pnbound}
p_{N}(k) \leq d^{-k}N.
\end{equation}
We have $v\in A$ if $\eta (\rho )=N$ and one of the $N$ random walks hits $v$.
Assuming $N=d^{m}$,
\[ \mathbf {P}[v\in A] = \frac{\mu }{N} \, p_{N}(k) \le \mu d^{-m} \min \{d^{-k}N,1\} = \mu \min \{d^{-k},d^{-m}\} . \]
Now let $\mathcal {S}=\mathbb {Z}\times \{0,1,\ldots \}$.
We split the vertices according to $(j,i)\in \mathcal {S}$, since each $v\in \Thom _{d}$ is at some level~$j$ and some distance $\abs{j}+2i$ from the root.
Our goal is to bound
\begin{align}\label{eq:wlambda.bound}
    \mathbf {E}w_\lambda (A) &= \sum _{v\in \Thom _{d}}\mathbf {P}[v\in A] \lambda ^{\ell (v)}
\le
\sum _{(j,i)\in \mathcal {S}} \mu \lambda ^{j} \varphi (j, \abs{j}+2i) \min \{d^{-|j|-2i} , d^{-m}\}
.
  \end{align}
  We break the set $\mathcal {S}$ into six parts:
  \begin{align*}
    \mathcal {S}_{1}^{+} &= \bigl\{(j,i)\in \mathcal {S}\colon \quad 1\leq j\leq m,\quad 2i \le m - j \bigr\},\\
    \mathcal {S}_{2}^{+} &= \bigl\{(j,i)\in \mathcal {S}\colon \quad 1\leq j\leq m,\quad 2i > m - j \bigr\},\\
    \mathcal {S}_{3}^{+} &= \bigl\{(j,i)\in \mathcal {S}\colon \quad j>m \bigr\},\\
    \mathcal {S}_{1}^{-} &= \bigl\{(-j,i)\in \mathcal {S}\colon \quad 0\leq j\leq m,\quad 2i \le m - j \bigr\},\\
    \mathcal {S}_{2}^{-} &= \bigl\{(-j,i)\in \mathcal {S}\colon \quad 0\leq j\leq m,\quad 2i > m - j \bigr\},\\
    \mathcal {S}_{3}^{-} &= \bigl\{(-j,i)\in \mathcal {S}\colon \quad j>m \bigr\}.
  \end{align*}
The sets $\mathcal {S}_{3}^\pm $ are the easiest to estimate.
Let $e_{j,i} = \lambda ^{j} \varphi (j, \abs{j}+2i) \min \{d^{-|j|-2i} , d^{-m}\}$.
Then
\begin{align}
\nonumber
\sum _{(j,i)\in \mathcal {S}_{3}^{+}} e_{j,i}
\leq
\sum _{j > m} \lambda ^{j} \sum _{i \ge 0} d^{j+i} d^{-j-2i}
=
C \sum _{j>m} \lambda ^{j}
\end{align}
and
\[
\sum _{(-j,i)\in \mathcal {S}_{3}^{-}} e_{-j,i}
\leq
\sum _{j > m} \lambda ^{-j} \sum _{i \ge 0} d^{i} d^{-j-2i}
=
C \sum _{j>m} (\lambda d)^{-j}
.
\]
Note that both vanish as $m\to \infty $.
For $\mathcal {S}_{1}^\pm $, we have
\begin{align*}
\sum _{(j,i)\in \mathcal {S}_{1}^{+}} e_{j,i}
&\leq
\sum _{1 \le j \le m} \lambda ^{j} \sum _{0 \le 2i \le m-j} d^{j+i} d^{-m}
=
d^{-m} \sum _{1 \le j \le m} (\lambda d)^{j} \sum _{i=0}^{\floor[\big ] {\tfrac{m-j}{2}}} d^{i}
\\&
\le
C d^{-m} \sum _{1 \le j \le m} (\lambda d)^{j} \, d^{\frac{m-j}{2}}
=
C d^{-\frac{m}{2}} \sum _{1 \le j \le m} (\lambda \sqrt{d})^{j}
=
C d^{-\frac{m}{2}} m,
\end{align*}
recalling that we set $\lambda =1/\sqrt{d}$. Similarly,
\begin{align*}
\sum _{(-j,i)\in \mathcal {S}_{1}^{-}} e_{-j,i}
&\leq
\sum _{0 \le j \le m} \lambda ^{-j} \sum _{0 \le 2i \le m-j} d^{i} d^{-m}
=
d^{-m} \sum _{0 \le j \le m} \lambda ^{-j} \sum _{i=0}^{\floor[\big ]{\tfrac{m-j}{2}}} d^{i}
\\&
\le
C d^{-m} \sum _{0 \le j \le m} \lambda ^{-j} \, d^{\frac{m-j}{2}}
=
C d^{-\frac{m}{2}} \sum _{0 \le j \le m} (\lambda \sqrt{d})^{-j}
=
C d^{-\frac{m}{2}} (m+1)
.
\end{align*}
Both expressions again vanish as $m\to \infty $.
Finally,
\begin{align*}
\sum _{(j,i)\in \mathcal {S}_{2}^{+}} e_{j,i}
&\leq
\sum _{1 \le j \le m} \lambda ^{j} \sum _{2i > m-j} d^{j+i} d^{-j-2i}
=
\sum _{1 \le j \le m} \lambda ^{j} \sum _{2i > m-j} d^{-i}
\\&
\le
C \sum _{1 \le j \le m} \lambda ^{j} \, d^{-\frac{m-j}{2}}
=
C d^{-\frac{m}{2}} \sum _{1 \le j \le m} (\lambda \sqrt{d})^{j}
=
C d^{-\frac{m}{2}} m
\end{align*}
and
\begin{align*}
\sum _{(-j,i)\in \mathcal {S}_{2}^{-}} e_{-j,i}
&\leq
\sum _{0 \le j \le m} \lambda ^{-j} \sum _{2i > m-j} d^{i} d^{-j-2i}
=
\sum _{0 \le j \le m} \lambda ^{-j}d^{-j} \sum _{2i>m-j} d^{-i}
\\&
\le
C \sum _{0 \le j \le m} \lambda ^{-j}d^{-j} \, d^{-\frac{m-j}{2}}
=
C d^{-\frac{m}{2}} \sum _{0 \le j \le m} (\lambda \sqrt{d})^{-j}
=
C d^{-\frac{m}{2}} (m+1)
,
\end{align*}
both vanishing like the other terms.
Therefore,
\[
\mathbf {E}w_\lambda (A) \le \mu \sum _{(j,i)\in \mathcal {S}} e_{j,i}
\]
can be made less than $1$ by choosing $m$ large and $N=d^{m}$, which proves the lemma.
\end{proof}

\begin{remark}
\label{prop:mean}
It is a classical result that the recurrence or transience of a branching random walk on the integers
depends only on the expected offspring distribution. We explain now how this result extends to symmetric nearest-neighbor branching
random walk on a regular tree.
To precisely define the BRW on the integers, let $Z$ be a point process on the integers.
The initial generation of the BRW is a single particle at $0$.
To obtain generation~$n+1$, each particle in generation~$n$
places new particles with positions given by an independent copy of $Z$ translated by the particle's position.
We assume that $Z$ always contains at least one point so that the process survives a.s.,
and that it contains a point in the negative integers with positive probability.

To give Biggins's criterion for transience, let $m(\lambda )=\mathbf {E}\sum _{x\in Z} e^{-\lambda x}$, where the sum
is over the atoms of $Z$.
The BRW is transient in the positive direction if and only if there
exists $\lambda >0$ so that $m(\lambda )\leq 1$. See \cite[Theorem~3]{Biggins} for a proof,
though the result goes back to \cite{BigginsMtg}. One can also of course
test for transience in the negative direction by flipping $Z$ across $0$,
and it turns out that transience in the negative and positive directions
and recurrence are the only possibilities. Note that $m(\lambda )$ depends only on the
expected number of atoms in $Z$ at each integer.

Now, consider a BRW on a regular tree starting with a single particle at the root,
where each particle reproduces independently
by placing particles relative to itself sampled from some distribution. Assume that this distribution
places particles only at neighbors of the root and
is invariant under tree automorphisms fixing the root.
Projecting each particle
by $\ell $ yields a BRW on the integers whose transience is determined by
the criterion above. Transience of the BRW on the integers implies transience of the BRW on the
tree. Recurrence of the BRW on the integers implies that level~0 of the tree is visited infinitely often. By this, invariance, and independence of
particles, each particle
in generation~1 almost surely has a descendant that visits level~0. For a generation~1 particle
at level~$1$, this implies that one of its descendant visits the root again.
By invariance under tree automorphisms, this is also true of a particle at level~$-1$.
Thus the root is visited infinitely often a.s.
\end{remark}

\section{Generalizations of the argument}
\label{sec:more}

We now sketch two extensions of the previous argument.
First and very simple is to show transience on $\Thom _{d}$ when the particles per site has infinite mean.
Second and more delicate is to show transience on $\Thom _{d}$ with $d \geq 14$ when the number of
particles not only has infinite mean but is a.s.\ at least 1.

Fix some $\mu >0$.
In the proof of Lemma~\ref{lem:mtg} we have in fact shown that $\mathbf {E}_{\pi _{N}} w_\lambda (A)$ can be made arbitrarily small as $N\to \infty $, where we use $\mathbf {E}_{\pi _{N}}$ to denote the expectation assuming the distribution of
particles is $\pi _{N}$.
Choose $N_{n}$ so that $\mathbf {E}_{\pi _{N_{n}}} w_\lambda (A)<2^{-n}$. Let $X_{n}$ be distributed as $\pi _{N_{n}}$,
let $X=\sum _{n=1}^{\infty }X_{n}$, and let $\pi $ be the distribution of $X$. Then
\begin{align}\label{eq:inf.trick}
\mathbf {E}_\pi w_\lambda (A) \le \sum _{n} \mathbf {E}_{\pi _{N_{n}}} w_\lambda (A) < 1,
\end{align}
and the proof of Theorem~\ref{thm:transience} carries on without any change to show transience with particle
counts given by $\pi $, even though $\pi $ has expectation $\sum _{n} \mu = \infty $.

Now suppose $d \ge 14$. We start by showing that the model is transient when the initial distribution
is given by $\pi _{N}$ plus an extra particle at each site. We combine
the approach introduced in \S \ref{sec:proof} with the standard one based on branching random walks.
First take $\zeta (v)$ independent with distribution $\pi _{N}$, and consider the configuration $\eta $ given by $\eta (\rho )=\zeta (\rho )$ and $\eta (v)=\zeta (v)+1$ for $v\ne \rho $ (as before,
altering the distribution at the root does not affect the property of a.s.\ transience).
We assign the guaranteed particle at each site \emph{type~1} and the $\zeta (v)$ particles \emph{type~2}.
Start with $\mathcal{B}_{0}=\{\rho \}$.
Define $\mathcal{B}_{n+1}$ inductively as follows.
Launch the type~2 particles from each site $v\in \mathcal{B}_{n}$, and allow only the type~1 particles to wake.
Take $\mathcal{B}_{n+1}$ as the set of sites outside of $\mathcal{B}_{0} \cup \dots \cup \mathcal{B}_{n}$ that are visited.
As before, the set $\cup _{n=0}^{\infty }\mathcal{B}_{n}$ consists of all sites ever visited in the frog model.
Moreover, $\mathbf {E}w_\lambda (\mathcal{B}_{n}) \le \alpha ^{n}$ where $\alpha :=\mathbf {E}w_\lambda (\mathcal{B}_{1})$.

So again it suffices to show that $\alpha <1$.
Instead of the hitting probability~\eqref{eq:p.bound}, we use the following estimate
based on BRWs:
\begin{lemma}
\label{lem:brw}
  Let $d\geq 6$, and let $v\in \Thom _{d}$ be an arbitrary vertex at distance $k$ from the root.
  Run the frog model with one sleeping frog per site starting with the particle at the root
  active.
  Then the
  probability of ever hitting $v$ is at most
  \begin{align*}
    \frac{d+1}{d+1-\sqrt{8d}}\biggl(\frac{4}{d+1}\biggr)^{k}.
  \end{align*}
\end{lemma}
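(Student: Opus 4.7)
The plan is to dominate the one-frog-per-site frog model on $\Thom _{d}$ by a branching random walk (BRW). Specifically, start a single particle at $\rho $ and, at each discrete step, let every active particle perform an independent SRW step and simultaneously spawn a fresh particle at its destination. Every awakening in the frog model corresponds to a birth in this BRW, so the set of sites visited by the frog model is contained in the set visited by the BRW. Applying a first-moment bound,
\begin{align*}
\mathbf {P}[v \text{ visited}] \le \mathbf {E}\bigl[\#\{\text{particle visits to } v\}\bigr] = \sum _{n\ge 0} 2^{n}\, \mathbf {P}[X_{n} = v],
\end{align*}
where $X_{n}$ denotes SRW on $\Thom _{d}$; here I use that generation $n$ of the BRW contains $2^{n}$ particles, each marginally distributed as an $n$-step SRW from $\rho $.

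Next I would evaluate the series via standard generating-function machinery. Let $H(z) = \sum _{n\ge 1} z^{n}\, \mathbf {P}[\tau _{w} = n]$ be the first-hit generating function of a designated neighbor $w$ of $\rho $ by SRW. A first-step decomposition yields the quadratic $dz H(z)^{2} - (d+1) H(z) + z = 0$, which after rationalization gives $H(z) = 2z / \bigl((d+1) + \sqrt{(d+1)^{2} - 4 d z^{2}}\bigr)$. By the tree structure, $\sum _{n} z^{n}\, \mathbf {P}[X_{n} = v] = H(z)^{k}/(1 - z H(z))$ for $v$ at distance $k$. Setting $z = 2$ and using the trivial estimate $H(2) \le 4/(d+1)$ --- immediate from dropping the nonnegative square root in the denominator of $H$ --- gives $H(2)^{k} \le (4/(d+1))^{k}$; a matching lower bound on $1 - 2H(2)$ then produces the prefactor.

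The main obstacle is that this naïve execution requires the discriminant $(d+1)^{2} - 16 d$ to be nonnegative, i.e., $d \ge 14$, for $H(2)$ to be real-valued, whereas the lemma asserts the estimate already for $d \ge 6$. The precise form $\sqrt{8d}$ in the prefactor (and the matching threshold $8d < (d+1)^{2}$) strongly suggests a sharper dominator in which the effective offspring mean per step is $\sqrt{2}$ rather than $2$: the corresponding Laplace transform $\sqrt{2}(d\lambda + \lambda ^{-1})/(d+1)$ has minimum $\sqrt{8d}/(d+1)$, which drops below $1$ exactly when $d \ge 6$. Since in the frog model each site is woken only once, the effective branching rate across particle-steps averages to something much smaller than the worst-case $2$; justifying such a refined dominator (either pointwise or at the level of the relevant first moment) and then running the generating-function computation with $z = \sqrt{2}$ in place of $z = 2$ should recover precisely the stated closed form. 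Pinning down that refined dominator is where I anticipate the real work.
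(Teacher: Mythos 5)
Your diagnosis of the difficulty is accurate, but the proposal stops exactly at the step that constitutes the entire content of the lemma. The naive domination by a BRW that doubles at every step does, as you note, only give a convergent first-moment bound for roughly $d\ge 14$, and you correctly guess from the form $\sqrt{8d}/(d+1)$ that a sharper dominator with a smaller effective offspring mean is needed. But you do not identify that dominator, and your candidate guess (a uniform effective branching rate of $\sqrt{2}$ per step) is not the right structure and is not justified by anything in the proposal. The missing idea is an \emph{asymmetric} offspring rule tied to the geometry of the tree: in the frog model with one sleeping frog per site, a particle that steps \emph{toward} the root moves onto a site lying on the already-traversed geodesic back to $\rho$, so that site has necessarily been visited before and no new frog can be woken there. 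Hence the frog model is dominated by the BRW in which a particle stepping toward the root (probability $1/(d+1)$) produces one offspring, while a particle stepping to any of the other $d$ neighbors produces two. This is a genuine pathwise coupling, not an averaging heuristic, and it is what the paper uses.

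With that dominator in hand, the computation is the exponential-tilting one you sketch: assigning weight $e^{-\theta k}$ to a particle at distance $k$ gives per-step mean weight multiplier $m=\frac{1}{d+1}e^{\theta}+\frac{2d}{d+1}e^{-\theta}$ (with the root itself handled separately, where the two-offspring weight $2e^{-\theta}<m$), so $\mathbf{E}W_n\le m^n$; optimizing at $\theta=\log(2d)/2$ yields $m=\sqrt{8d}/(d+1)<1$ exactly for $d\ge 6$, and summing $\mathbf{E}W_n$ over $n\ge k$, then dividing by the $(d+1)d^{k-1}$ vertices at distance $k$, produces the stated bound with prefactor $1/(1-m)$. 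Note that the optimal tilt is $e^{2\theta}=2d$, not $e^{2\theta}=d$ as your symmetric-$\sqrt{2}$ ansatz would give; the two happen to share the same minimum value, but only the asymmetric rule admits the required domination of the frog model. As written, your argument proves the lemma only for $d$ large enough that the crude doubling BRW converges, and explicitly defers the case $6\le d<14$ that the lemma is designed to cover.
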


For $d\geq 14$, we have $4/(d+1)<d^{-1/2}$.
By the above lemma, if a single type~2 particle begins at the root and the frog model
runs with only type~1 particles allowed to wake, then $v$ at level~$k$ is hit with probability
less than $C d^{-\beta k}$
for some $\beta >1/2$ (constants $C$ now depend on $d$ and $\beta $).
Let $\varepsilon =1-\beta <1/2$, and let $\beta '=1-2\varepsilon >0$.
Instead of~\eqref{eq:pnbound}, we have $p_{N}(k) \leq C d^{-\beta k}N$,
and we consider the sum over $(j,i)\in \mathcal {S}$ of
\[
e_{j,i} = C \lambda ^{j} \varphi (j, \abs{j}+2i) \min \{d^{-\beta (|j|+2i)} , d^{-m}\}
\]
which is the same as before with an extra $\beta $.
The estimates in $\mathcal {S}_{1}^\pm $ of course do not change.
The estimate in $\mathcal {S}_{2}^{+}$ and $\mathcal {S}_{3}^{+}$ becomes $e_{j,i} \le C (\lambda d^\varepsilon )^{j} d^{-\beta 'i}$.
The estimate in $\mathcal {S}_{2}^{-}$ and $\mathcal {S}_{3}^{-}$ becomes $e_{-j,i} \le C (\lambda d^\beta )^{-j} d^{-\beta 'i}$.
Since these are still summable over $\mathcal {S}$, the sum over $\mathcal {S}_{3}^\pm $ vanishes for large $m$ as before.
Carrying the same computations as in the proof of Lemma~\ref{lem:mtg} we get as upper bounds
$C d^{-\beta '\frac{m}{2}} m$
for the sum over
$\mathcal {S}_{2}^{+}$
and
$C d^{-\beta '\frac{m}{2}} (m+1)$
for the sum over
$\mathcal {S}_{2}^{-}$.
Both vanish for large $m$, and therefore $\alpha $ can be made arbitrarily small.
This proves that the model is transient with distribution $\pi _{N}$ plus an extra particle
for large enough $N$. Using the same idea as in \eqref{eq:inf.trick}, we can maintain $\alpha <1$
while replacing $\pi _{N}$ with a distribution with infinite expectation, and the proof goes on as before.

\begin{proof}
[Proof of Lemma~\ref{lem:brw}]

Consider the following BRW.
One particle starts at the root $\rho \in \Thom _{d}$. At every step, each particle at $u$
chooses a neighbor $u'$ of $u$ uniformly at random. If $u'$ lies on the path from $\rho $ to $u$ (including
$\rho $ itself), the particle produces one offspring at $u'$; if not, it produces two offspring at $u'$.
Since a particle moving towards the root in the frog model never wakes any particles, this BRW
dominates the frog model with one sleeping particle per site,
in the sense that it can be coupled with it so that every active particle
in the frog model at time~$n$ also exists in the BRW. Thus it suffices to bound
the probability that this BRW hits $v$.

  Assign a particle at distance~$k$ from the root to have weight $e^{-\theta k}$,
  for $\theta $ to be chosen later. Let $W_{n}$ be the total weight of all particles
  in the BRW at time~$n$. Define
  \begin{align*}
    m = \frac{1}{d+1}e^\theta + \frac{2d}{d+1}e^{-\theta }.
  \end{align*}
  We claim that
\begin{align}\label{eq:mtg}
    \mathbf {E}[W_{n+1}\mid W_{n}]\leq mW_{n}.
  \end{align}
  Indeed, consider a particle at distance~$k$ from the root
  at time~$n$, which has weight $e^{-\theta k}$. If $k\geq 1$, then at its next step it produces
  a single particle with weight $e^{-\theta (k-1)}$ with probability~$1/(d+1)$, or it produces
  two particles each with weight $e^{-\theta (k+1)}$ with probability~$d/(d+1)$. Thus the expected
  weight from the offspring is exactly $m$ times the weight of the vertex. If $k=0$, then the particle
  at its next step deterministically produces two offspring with weight $e^{-\theta }$, which collectively
  have weight $2e^{-\theta }<m$. Thus, the expected weight of the offspring of any particle
  is bounded by $m$ times the weight of the particle, which proves~\eqref{eq:mtg}.
  Since $W_{0}=1$, this shows that $\mathbf {E}W_{n} \leq m^{n}$.
  Set $\theta =\log (2d)/2$ to optimize $m$. This gives $m=\sqrt{8d}/(d+1)$. Under our assumption
  $d\geq 6$, we have $m < 1$.

  Now, let $X_{k}$ denote the number of vertices at distance~$k$ from the root that are ever
  visited by the BRW. A particle at distance~$k$ from the root has weight $e^{-\theta k}$
  at that time, and the first time that there can be a particle at distance~$k$ from the root
  is time~$k$. Thus, $e^{-\theta k}X_{k}\leq \sum _{n=k}^{\infty } W_{n}$. Taking expectations,
  \begin{align*}
    \mathbf {E}X_{k} \leq e^{\theta k}\frac{m^{k}}{1-m}
      =\frac{1}{1-m} \biggl(\frac{4d}{d+1}\biggr)^{k}.
  \end{align*}
  Since $\mathbf {E}X_{k}$ is the sum of the probabilities of each the $(d+1)d^{k-1}$ vertices at distance~$k$
  from the root being visited,
  \begin{align*}
    \mathbf {P}[\text{$v$ is visited}] &= \frac{1}{(d+1)d^{k-1}}\mathbf {E}X_{k} \leq
       \frac{1}{1-m}\biggl(\frac{4}{d+1}\biggr)^{k}. \qedhere
  \end{align*}
\end{proof}

\ACKNO{T.J.\ thanks the support and hospitality of the NYU-ECNU Institute of Mathematical Sciences at NYU Shanghai.
L.R.\ received support from grants UBACYT-2017 Mod-I 20020160100147BA and PICT 2015-3154.
T.J.\ received support from NSF grants DMS-1401479 and DMS-1811952 and PSC-CUNY award \#61540-00~49.
}







\end{document}